\documentclass[]{article}
\usepackage{amssymb,amsmath,amsfonts,soul,amsthm,enumerate}

\newtheorem{theorem}{Theorem}[section]

\newtheorem{definition}{Definition}

\newtheorem{lemma}{Lemma}[section]

\usepackage{float}
\usepackage{calc,tikz}
\usetikzlibrary{matrix}
\usetikzlibrary{arrows,decorations.markings,positioning,shapes.geometric}
\tikzstyle{vertex}=[circle, draw, inner sep=1pt, minimum size=4pt]
\newcommand{\vertex}{\node[vertex]}

\tikzstyle{ann} = [fill=white,font=\footnotesize,inner sep=1pt]
\tikzstyle{arrow} = [thick,<-->,>=stealth]
\title{Maximum Number of Non-Intersecting Diagonals in Square Arrays}
\author{Marbarisha M. Kharkongor and Joseph Varghese Kureethara\thanks{Christ University, Bangalore, India}}
\date{}
\begin{document}

\maketitle

\begin{abstract}
In this paper, we derive a formula to express the maximum number of non-intersecting diagonals of arbitrary length that can be drawn in $n \times n$ square arrays, where $n$ is a multiple of $l+1$.
\end{abstract}

\section{\large Introduction}

Spatial filling problems have been extensively explored and examined over the years and the results obtained along the way are important as they are applied to various branches of science. In this paper, we shall be study one aspect of such a filling problem. The problem of finding the maximum number of non-intersecting diagonals of length 1 that can be accommodated in an array \cite{a} is the motivation of this paper.

The problem of finding maximum number of non-intersecting diagonals was reminiscent of classical chessboard problems that involve placing the maximum number of certain types of chess pieces (e.g., queens, bishops, rooks or pawns) on the board so that no piece attacks another \cite{d}. 

In this paper, we will generalize the results to diagonals of arbitrary length subject to certain constraints.

This work is an improvement of a study already done on the maximum number of non-intersecting diagonals of length 1 (length of a diagonal is elaborated later in this paper) in an array \cite{a}. In the earlier study by Boyland \textit{et al}.\cite{a}, extensive work has been done in obtaining results for diagonals of length 1 in square as well as rectangular arrays.

In the context of the earlier work \cite{a}, this paper explores the idea of diagonals of any finite length, where a general formula for the maximum number of non-intersecting diagonals of any finite length $l$ is obtained as the final result. However, generalising the length of the diagonals brings about certain complexities which require certain conditions to be imposed so as to get meaningful results. In this paper, only positive diagonals are considered in square arrays of size $n \times n$ where $n$ is a multiple of $l+1$.

There is also a scope for further study on this topic, where the constraints may be removed. The problem can also be extended to diagonals of finite length in 3-dimensional space.

An array is a systematic arrangement of similar objects in rows and columns. In this paper, we deal with two dimensional square arrays. An $n \times n$ square array will be denoted as an $n$-array. A diagonal of a unit square in an array is a straight line joining the opposite corners (or opposite lattice points) of the unit square.

In this paper, we will be considering only positive diagonals, $i.e.,$ diagonals having positive slope.

A diagonal of a unit square in an array can be thought of as having length 1, and is denoted as a 1-diagonal.
Two 1-diagonals in an array are said to intersect if they share a common lattice point or if they belong to the same unit square.
\begin{definition}
	Consider $l$ consecutive unit squares in an array, such that any two adjacent unit squares will have exactly one lattice point in common. A diagonal of length $l$ is a diagonal which is contained in all of these $l$ unit squares in such a way that the diagonal passes through the common lattice points. Such a diagonal is denoted as a $l$-diagonal.
\end{definition}
Any two $l$-diagonals in an array are said to intersect if any of the 1-diagonals that form them intersect each other.

\begin{definition}\label{def:Larran}
	The L-arrangement is a way of arranging diagonals in an $n$-array. This method of arrangement is as follows:
	\begin{enumerate}
		\item $l$-diagonals are drawn starting from the leftmost column of the array in such a way that each diagonal occupies the first $l$ leftmost columns. Then $l$-diagonals are drawn starting from the bottom row in such a way that each diagonal occupies the last $l$ bottom rows. This step forms the outermost L.
		\item A column is skipped after the columns that are accommodating the previous l-diagonals, and then l-diagonals are drawn starting from the next column and are continued downwards. A row is skipped after the rows that are accommodating the previous l-diagonals  and then l-diagonals are drawn starting from the row above the skipped row and are continued towards the right.
		$(iii)$ Steps (i) and (ii) are repeated until number of columns and rows that are left is less than l+1.
	\end{enumerate}
\end{definition}
Figure \ref{fig:larr} is an illustration of the L-arrangement of 2-diagonals in an 8$\times$8 array.


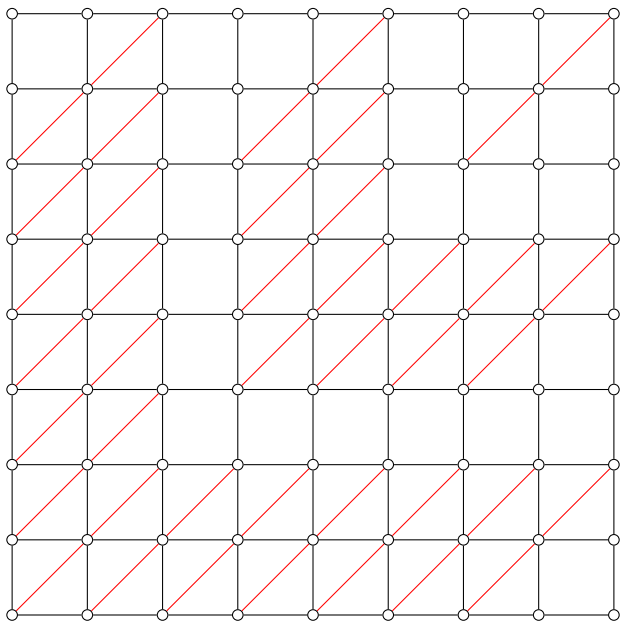
\begin{figure}[h!]
	\centering
	\begin{tikzpicture}
	\vertex (u00) at (0,0) {};
	\vertex (u10) at (1,0) {};
	\vertex (u20) at (2,0) {};
	\vertex (u30) at (3,0) {};	
	\vertex (u40) at (4,0) {};
	\vertex (u50) at (5,0) {};	
	\vertex (u60) at (6,0) {};
	\vertex (u70) at (7,0) {};
	\vertex (u80) at (8,0) {};
	\vertex (u01) at (0,1) {};
	\vertex (u11) at (1,1) {};
	\vertex (u21) at (2,1) {};
	\vertex (u31) at (3,1) {};	
	\vertex (u41) at (4,1) {};
	\vertex (u51) at (5,1) {};	
	\vertex (u61) at (6,1) {};
	\vertex (u71) at (7,1) {};
	\vertex (u81) at (8,1) {};
	\vertex (u02) at (0,2) {};
	\vertex (u12) at (1,2) {};
	\vertex (u22) at (2,2) {};
	\vertex (u32) at (3,2) {};	
	\vertex (u42) at (4,2) {};
	\vertex (u52) at (5,2) {};	
	\vertex (u62) at (6,2) {};
	\vertex (u72) at (7,2) {};
	\vertex (u82) at (8,2) {};
	\vertex (u03) at (0,3) {};
	\vertex (u13) at (1,3) {};
	\vertex (u23) at (2,3) {};
	\vertex (u33) at (3,3) {};	
	\vertex (u43) at (4,3) {};
	\vertex (u53) at (5,3) {};	
	\vertex (u63) at (6,3) {};
	\vertex (u73) at (7,3) {};
	\vertex (u83) at (8,3) {};
	\vertex (u04) at (0,4) {};
	\vertex (u14) at (1,4) {};
	\vertex (u24) at (2,4) {};
	\vertex (u34) at (3,4) {};	
	\vertex (u44) at (4,4) {};
	\vertex (u54) at (5,4) {};	
	\vertex (u64) at (6,4) {};
	\vertex (u74) at (7,4) {};
	\vertex (u84) at (8,4) {};
	\vertex (u05) at (0,5) {};
	\vertex (u15) at (1,5) {};
	\vertex (u25) at (2,5) {};
	\vertex (u35) at (3,5) {};	
	\vertex (u45) at (4,5) {};
	\vertex (u55) at (5,5) {};	
	\vertex (u65) at (6,5) {};
	\vertex (u75) at (7,5) {};
	\vertex (u85) at (8,5) {};
	\vertex (u06) at (0,6) {};
	\vertex (u16) at (1,6) {};
	\vertex (u26) at (2,6) {};
	\vertex (u36) at (3,6) {};	
	\vertex (u46) at (4,6) {};
	\vertex (u56) at (5,6) {};	
	\vertex (u66) at (6,6) {};
	\vertex (u76) at (7,6) {};
	\vertex (u86) at (8,6) {};
	\vertex (u07) at (0,7) {};
	\vertex (u17) at (1,7) {};
	\vertex (u27) at (2,7) {};
	\vertex (u37) at (3,7) {};	
	\vertex (u47) at (4,7) {};
	\vertex (u57) at (5,7) {};	
	\vertex (u67) at (6,7) {};
	\vertex (u77) at (7,7) {};
	\vertex (u87) at (8,7) {};
	\vertex (u08) at (0,8) {};
	\vertex (u18) at (1,8) {};
	\vertex (u28) at (2,8) {};
	\vertex (u38) at (3,8) {};	
	\vertex (u48) at (4,8) {};
	\vertex (u58) at (5,8) {};	
	\vertex (u68) at (6,8) {};
	\vertex (u78) at (7,8) {};
	\vertex (u88) at (8,8) {};
	
	\path 
	(u00) edge (u01)
	(u01) edge (u02)
	(u02) edge (u03)
	(u03) edge (u04)
	(u04) edge (u05)
	(u05) edge (u06)
	(u06) edge (u07)
	(u07) edge (u08)
	
	(u10) edge (u11)
	(u11) edge (u12)
	(u12) edge (u13)
	(u13) edge (u14)
	(u14) edge (u15)
	(u15) edge (u16)
	(u16) edge (u17)
	(u17) edge (u18)
	(u20) edge (u21)
	(u21) edge (u22)
	(u22) edge (u23)
	(u23) edge (u24)
	(u24) edge (u25)
	(u25) edge (u26)
	(u26) edge (u27)
	(u27) edge (u28)
	(u30) edge (u31)
	(u31) edge (u32)
	(u32) edge (u33)
	(u33) edge (u34)
	(u34) edge (u35)
	(u35) edge (u36)
	(u36) edge (u37)
	(u37) edge (u38)
	(u40) edge (u41)
	(u41) edge (u42)
	(u42) edge (u43)
	(u43) edge (u44)
	(u44) edge (u45)
	(u45) edge (u46)
	(u46) edge (u47)
	(u47) edge (u48)
	(u50) edge (u51)
	(u51) edge (u52)
	(u52) edge (u53)
	(u53) edge (u54)
	(u54) edge (u55)
	(u55) edge (u56)
	(u56) edge (u57)
	(u57) edge (u58)
	
	(u60) edge (u61)
	(u61) edge (u62)
	(u62) edge (u63)
	(u63) edge (u64)
	(u64) edge (u65)
	(u65) edge (u66)
	(u66) edge (u67)
	(u67) edge (u68)
	(u70) edge (u71)
	(u71) edge (u72)
	(u72) edge (u73)
	(u73) edge (u74)
	(u74) edge (u75)
	(u75) edge (u76)
	(u76) edge (u77)
	(u77) edge (u78)
	(u80) edge (u81)
	(u81) edge (u82)
	(u82) edge (u83)
	(u83) edge (u84)
	(u84) edge (u85)
	(u85) edge (u86)
	(u86) edge (u87)
	(u87) edge (u88)
	(u00) edge (u10)
	(u10) edge (u20)
	(u20) edge (u30)
	(u30) edge (u40)
	(u40) edge (u50)
	(u50) edge (u60)
	(u60) edge (u70)
	(u70) edge (u80)
	(u01) edge (u11)
	(u11) edge (u21)
	(u21) edge (u31)
	(u31) edge (u41)
	(u41) edge (u51)
	(u51) edge (u61)
	(u61) edge (u71)
	(u71) edge (u81)
	(u02) edge (u12)
	(u12) edge (u22)
	(u22) edge (u32)
	(u32) edge (u42)
	(u42) edge (u52)
	(u52) edge (u62)
	(u62) edge (u72)
	(u72) edge (u82)
	(u03) edge (u13)
	(u13) edge (u23)
	(u23) edge (u33)
	(u33) edge (u43)
	(u43) edge (u53)
	(u53) edge (u63)
	(u63) edge (u73)
	(u73) edge (u83)
	(u04) edge (u14)
	(u14) edge (u24)
	(u24) edge (u34)
	(u34) edge (u44)
	(u44) edge (u54)
	(u54) edge (u64)
	(u64) edge (u74)
	(u74) edge (u84)
	(u05) edge (u15)
	(u15) edge (u25)
	(u25) edge (u35)
	(u35) edge (u45)
	(u45) edge (u55)
	(u55) edge (u65)
	(u65) edge (u75)
	(u75) edge (u85)
	(u06) edge (u16)
	(u16) edge (u26)
	(u26) edge (u36)
	(u36) edge (u46)
	(u46) edge (u56)
	(u56) edge (u66)
	(u66) edge (u76)
	(u76) edge (u86)
	(u07) edge (u17)
	(u17) edge (u27)
	(u27) edge (u37)
	(u37) edge (u47)
	(u47) edge (u57)
	(u57) edge (u67)
	(u67) edge (u77)
	(u77) edge (u87)
	(u08) edge (u18)
	(u18) edge (u28)
	(u28) edge (u38)
	(u38) edge (u48)
	(u48) edge (u58)
	(u58) edge (u68)
	(u68) edge (u78)
	(u78) edge (u88)
	(u00) edge [color=red] (u11)
	(u10) edge [color=red] (u21)
	(u20) edge [color=red] (u31)
	(u30) edge [color=red] (u41)
	(u40) edge [color=red] (u51)
	(u50) edge [color=red] (u61)
	(u60) edge [color=red] (u71)
	(u01) edge [color=red] (u12)
	(u11) edge [color=red] (u22)
	(u21) edge [color=red] (u32)
	(u31) edge [color=red] (u42)
	(u41) edge [color=red] (u52)
	(u51) edge [color=red] (u62)
	(u61) edge [color=red] (u72)
	(u71) edge [color=red] (u82)
	(u02) edge [color=red] (u13)
	(u03) edge [color=red] (u14)
	(u04) edge [color=red] (u15)
	(u05) edge [color=red] (u16)
	(u06) edge [color=red] (u17)
	(u12) edge [color=red] (u23)
	(u13) edge [color=red] (u24)
	(u14) edge [color=red] (u25)
	(u15) edge [color=red] (u26)
	(u16) edge [color=red] (u27)
	(u17) edge [color=red] (u28)
	(u33) edge [color=red] (u44)
	(u43) edge [color=red] (u54)
	(u53) edge [color=red] (u64)
	(u63) edge [color=red] (u74)
	(u34) edge [color=red] (u45)
	(u44) edge [color=red] (u55)
	(u54) edge [color=red] (u65)
	(u64) edge [color=red] (u75)
	(u74) edge [color=red] (u85)
	(u35) edge [color=red] (u46)
	(u45) edge [color=red] (u56)
	(u36) edge [color=red] (u47)
	(u46) edge [color=red] (u57)
	(u47) edge [color=red] (u58)
	(u66) edge [color=red] (u77)
	(u77) edge [color=red] (u88)
	
	
	;
	\end{tikzpicture}
	\caption{L-arrangement of 2-diagonals in an 8$\times$8 array}
\end{figure}
\begin{definition} $\cite{b}$
	Let $P_x$ be a path on x vertices. A subpath $P_y$ of $P_x$ is a path on y vertices where $y\le x$
\end{definition}
We use the following notations:\\
$\bullet$ $D_l(n)$ : The maximum number of $l$-diagonals that can be accommodated in an $n$-array.\\
$\bullet$ $L_l(n)$ : The number of $l$-diagonals that can be accommodated in an $n$-array.\\
$\bullet$ $m_l(P_k)$ : The maximum number of subpaths of length $l$ in a path $P_k$, such that every subpath is at a distance of at least one in $P_k$ (Note that the subpath has $l+1$ vertices).

\renewcommand{\thesection}{\large 2}
\section{\large Maximum Number of Non-Intersecting $l$-Diagonals in an $n$-array}

\begin{theorem}\label{th:21}
	For any l-diagonal in an n-array, where n is a multiple of l+1 and l is a positive integer, we have $L_l(n)=\frac{n^2-n(l-2)}{l+1}$
\end{theorem}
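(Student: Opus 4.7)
The plan is to establish the formula by directly enumerating the $l$-diagonals produced by the L-arrangement of Definition~\ref{def:Larran}. Since $n = k(l+1)$ for some positive integer $k$, the construction terminates after exactly $k$ nested L's, which I index by $i = 0, 1, \dots, k-1$; the $i$-th L has its inner corner at the lattice point $(i(l+1), i(l+1))$ and occupies the frame of width $l$ extending upward and to the right from there. Concretely, every $l$-diagonal belonging to the $i$-th L has its lower-left endpoint $(x_0, y_0)$ with either $y_0 = i(l+1)$ (horizontal leg) or $x_0 = i(l+1)$ (vertical leg), in both cases subject to $x_0, y_0 \ge i(l+1)$ (so the diagonal belongs to the $i$-th L rather than an outer one) and $x_0, y_0 \le n - l$ (so the diagonal fits).

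Next I would count the diagonals in a single L. The horizontal leg yields $n - l - i(l+1) + 1$ diagonals and the vertical leg yields the same count; the single ``corner'' diagonal starting at $(i(l+1), i(l+1))$ lies in both, so by inclusion--exclusion the $i$-th L contains
\[
2\bigl(n - l - i(l+1)\bigr) + 1
\]
$l$-diagonals. Summing over $i = 0, 1, \dots, k-1$ and using $k(l+1) = n$:
\begin{align*}
L_l(n)
&= \sum_{i=0}^{k-1}\bigl[2(n-l) + 1 - 2i(l+1)\bigr] \\
&= k(2n - 2l + 1) - (l+1)k(k-1) \\
&= k(2n - 2l + 1) - n(k-1),
\end{align*}
which after substituting $k = n/(l+1)$ and collecting terms simplifies to $\frac{n(n - l + 2)}{l+1} = \frac{n^2 - n(l-2)}{l+1}$, the claimed value.

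Before running the count one must also verify that the L-arrangement is a valid placement, i.e., that no two of its $l$-diagonals intersect. This is essentially free: every diagonal has slope $+1$, so two of them can intersect only if they lie on a common line $y - x = c$, and the ``skip a row/column'' step of the construction guarantees that any two consecutive diagonals on such a line are separated by at least one lattice point, ruling out shared lattice points or unit squares. The main obstacle, then, is really just careful bookkeeping: pinning down the correct ranges for $(x_0, y_0)$ on each leg, handling the overlap at the corner without double counting, and pushing the resulting arithmetic progression sum through to the stated closed form.
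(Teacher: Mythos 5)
Your proposal is correct and takes essentially the same route as the paper: both count the L-arrangement layer by layer, your per-L total $2\bigl(n-l-i(l+1)\bigr)+1$ being exactly the paper's two terms $\bigl(n-i(l+1)-(l-1)\bigr)+\bigl(n-i(l+1)-l\bigr)$ combined, followed by the same arithmetic-progression sum and substitution $k=n/(l+1)$. The only difference is presentational (inclusion--exclusion at the corner instead of two separate leg counts, plus an explicit non-intersection check that the paper omits), so no further comparison is needed.
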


\begin{proof}
	Since $n$ is a multiple of $l+1$, we have 
	\begin{equation*}
	n\equiv 0~ mod(l+1) 	\end{equation*}\begin{equation*}\implies n=(l+1)\times a,
	\end{equation*}
	where $a$ is a positive integer.\\
	In accordance with the L-arrangement as defined in Definition \ref{def:Larran}, the first $l-1$ columns and the last $l-1$ rows will not accommodate any $l$-diagonal. Also, a column and a row is to be skipped after every L-formation. Therefore, the number of diagonals in the L-arrangement is:\\
	
	\begin{equation*}
	L_l(n)=(n-(l-1)) + (n-l) + (n-(l+1)-(l-1)) + (n-(l+1)-l)   (n-2(l+1)-(l-1))
	\end{equation*}
	\begin{equation*}
	~~~~~~~~~~~~~~~~~~~~~~+(n-2(l+1)-l) + (n-3(l+1)-(l-1)) + (n-3(l+1)-l) + .....
	\end{equation*}
	\begin{equation*}
	~~~~~~~~~~~~~~~~~~~	..... +(n-(a-1)(l+1)-(l-1)) + (n-(a-1)(l+1)-l)
	\end{equation*}
	
	\begin{equation*}
	\implies L_l(n)=(n-(l-1)) + (n-l) + \sum_{j=1}^{a-1}(n-j(l+1)-(l-1))+ \sum_{j=1}^{a-1}(n-j(l+1)-l)   	
	\end{equation*}
	
	\begin{equation*}
	\implies L_l(n)=2n-2l+1+2n(a-1)-2\bigg(\frac{(a-1)a}{2}\bigg)(l+1)-(a-1)(l-1) -(a-1)l
	\end{equation*}
	
	\begin{equation*}
	\implies L_l(n)=2n-2l+1+(a-1)(2n-a(l+1)-(l-1) -l)
	\end{equation*}\\
	Substituting \begin{math}
	a=\frac{n}{l+1}
	\end{math}, we have,
	
	\begin{equation*}
	~~~~~~~~L_l(n)=2n-2l+1+\big{(\frac{n}{l+1}-1}\big)(2n-n-2l+1)
	\end{equation*}
	
	\begin{equation*}
	\implies L_l(n)=2n-2l+1+\big{(\frac{n-l-1}{l+1}}\big)(n-2l+1)
	\end{equation*}
	
	\begin{equation*}
	\implies L_l(n)=\frac{2n(l+1)-2l(l+1)+(l+1)+(n-l-1)(n-2l+1)}{l+1}
	\end{equation*}
	
	\begin{equation*}
	\implies    L_l(n)=\frac{2n+n^2-nl}{l+1}
	\end{equation*}
	
	\begin{equation*}
	\implies  L_l(n)=\frac{n^2-n(l-2)}{l+1}
	\end{equation*}

\end{proof}
Since this L-arrangement is one form of an L-arrangement, we have $L_l(n)$ as a lower bound for $D_l(n)$,
\begin{equation*}
i.e.,~L_l(n) \le D_l(n)
\end{equation*}
\begin{equation}\label{eq:21}
i.e.,~ D_l(n) \ge \frac{n^2-n(l-2)}{l+1}
\end{equation}

In order to find an upper bound for $D_l(n)$, the concept of path in Graph Theory is being used \cite{b}.

We consider the lattice points of an array to be synonymous with the vertices of a graph as shown in Figure \ref{fig:larr}.

\begin{figure}[h!]
	\centering
	\begin{tikzpicture}
	\vertex [fill=black] (u00) at (0,0) [label=below:$v_{k,1}$]{};
	\vertex [fill=black] (u10) at (1,0) [label=below:$v_{k,2}$]{};
	\vertex [fill=black] (u50) at (5,0) [label=below:$v_{k,k-3}$]{};	
	\vertex [fill=black] (u60) at (6,0) [label=below:$v_{k,k-2}$]{};
	\vertex [fill=black] (u70) at (7,0) [label=below:$v_{k,k-1}$]{};
	\vertex [fill=black] (u80) at (8,0) [label=below:$v_{k,k}$]{};
	\vertex [fill=black] (u01) at (0,1) [label=below:$v_{k-1,1}$]{};
	\vertex [fill=black] (u11) at (1,1) [label=below:$v_{k-1,2}$]{};
	\vertex [fill=black] (u21) at (2,1) [label=below:$v_{k-1,3}$]{};
	\vertex [fill=black] (u61) at (6,1) [label=below:$v_{k-1,k-2}$] {};
	\vertex [fill=black] (u71) at (7,1) [label=below:$v_{k-1,k-1}$] {};
	\vertex [fill=black] (u81) at (8,1) [label=below:$v_{k-1,k}$] {};
	\vertex (u02) at (0,2) {};
	\vertex [fill=black] (u12) at (1,2) [label=below:$v_{k-2,2}$] {};
	\vertex [fill=black] (u22) at (2,2) [label=below:$v_{k-2,3}$] {};
	\vertex [fill=black] (u32) at (3,2) [label=below:$v_{k-2,4}$] {};	
	\vertex [fill=black] (u72) at (7,2) [label=below:$v_{k-2,k-1}$] {};
	\vertex [fill=black] (u82) at (8,2) [label=below:$v_{k-2,k}$] {};
	\vertex [fill=black] (u23) at (2,3) [label=below:$v_{k-3,3}$] {};
	\vertex [fill=black] (u83) at (8,3) [label=below:$v_{k-3,k}$] {};
	\vertex [fill=black] (u05) at (0,5) [label=below:$v_{4,1}$] {};
	\vertex [fill=black] (u06) at (0,6) [label=below:$v_{3,1}$] {};
	\vertex [fill=black] (u16) at (1,6) [label=below:$v_{3,2}$] {};
	\vertex [fill=black] (u56) at (5,6) [label=below:$v_{3,k-3}$] {};	
	\vertex [fill=black] (u66) at (6,6) [label=below:$v_{3,k-2}$] {};
	\vertex [fill=black] (u76) at (7,6) [label=below:$v_{3,k-1}$] {};
	\vertex [fill=black] (u07) at (0,7) [label=below:$v_{2,1}$] {};
	\vertex [fill=black] (u17) at (1,7) [label=below:$v_{2,2}$] {};
	\vertex [fill=black] (u27) at (2,7) [label=below:$v_{2,3}$] {};
	\vertex [fill=black] (u67) at (6,7) [label=below:$v_{2,k-2}$] {};
	\vertex [fill=black] (u77) at (7,7) [label=below:$v_{2,k-1}$] {};
	\vertex [fill=black] (u87) at (8,7) [label=below:$v_{2,k}$] {};
	\vertex [fill=black] (u08) at (0,8) [label=below:$v_{1,1}$] {};
	\vertex [fill=black] (u18) at (1,8) [label=below:$v_{1,2}$] {};
	\vertex [fill=black] (u28) at (2,8) [label=below:$v_{1,3}$] {};
	\vertex [fill=black] (u38) at (3,8) [label=below:$v_{1,4}$] {};	
	\vertex [fill=black] (u78) at (7,8) [label=below:$v_{1,k-1}$] {};
	\vertex [fill=black] (u88) at (8,8) [label=below:$v_{1,k}$] {};

	\vertex [fill,circle,inner sep=0pt,minimum size=1pt](u29) at (2,9) [label=$P_{k-2}$] {};
	\vertex [fill,circle,inner sep=0pt,minimum size=1pt](u290) at (1.3,8.3) {};
	\vertex [fill,circle,inner sep=0pt,minimum size=1pt](u39) at (3,9) [label=$P_{k-3}$] {};
	\vertex [fill,circle,inner sep=0pt,minimum size=1pt](u390) at (2.3,8.3) {};
	\vertex [fill,circle,inner sep=0pt,minimum size=1pt](u49) at (4,9) [label=$P_{k-4}$] {};
	\vertex [fill,circle,inner sep=0pt,minimum size=1pt](u490) at (3.3,8.3) {};
	\vertex [fill,circle,inner sep=0pt,minimum size=1pt](u89) at (8,9) [label=$P_{1}$]  {};
	\vertex [fill,circle,inner sep=0pt,minimum size=1pt](u890) at (7.3,8.3) {};
	\vertex [fill,circle,inner sep=0pt,minimum size=1pt](u99) at (9,9) [label=$P_{0}$] {};
	\vertex [fill,circle,inner sep=0pt,minimum size=1pt](u990) at (8.3,8.3) {};
	\vertex [fill,circle,inner sep=0pt,minimum size=1pt](u98) at (9,8) [label=$P'_{1}$] {};
	\vertex [fill,circle,inner sep=0pt,minimum size=1pt](u980) at (8.3,7.3) {};
	\vertex [fill,circle,inner sep=0pt,minimum size=1pt](u92) at (9,2) [label=$P'_{k-2}$] {};
	\vertex [fill,circle,inner sep=0pt,minimum size=1pt](u920) at (8.3,1.3) {};
	\vertex [fill,circle,inner sep=0pt,minimum size=1pt](u93) at (9,3) [label=$P'_{k-3}$] {};
	\vertex [fill,circle,inner sep=0pt,minimum size=1pt](u930) at (8.3,2.3) {};
	\vertex [fill,circle,inner sep=0pt,minimum size=1pt](u94) at (9,4) [label=$P'_{k-4}$] {};
	\vertex [fill,circle,inner sep=0pt,minimum size=1pt](u940) at (8.3,3.3) {};
	\path 
	(u07) edge (u18)
	(u06) edge  (u28)
	(u05) edge  (u38)	
	(u04) edge [dashed] (u48)
	(u03) edge [dashed] (u58)
	(u02) edge [dashed] (u68)
	(u01) edge (u78)
	(u00) edge (u88)
	(u10) edge (u87)
	(u20) edge [dashed] (u86)
	(u30) edge [dashed] (u85)
	(u40) edge [dashed] (u84)
	(u50) edge  (u83)	
	(u60) edge  (u82)	
	(u70) edge  (u81)		
	;
	\path[->] (u29) edge [dashed] (u290);
	\path[->] (u39) edge [dashed] (u390);
	\path[->] (u49) edge [dashed] (u490);
	\path[->] (u89) edge [dashed] (u890);
	\path[->] (u99) edge [dashed] (u990);
	\path[->] (u98) edge [dashed] (u980);
	\path[->] (u92) edge [dashed] (u920);
	\path[->] (u93) edge [dashed] (u930);
	\path[->] (u94) edge [dashed] (u940);
	\end{tikzpicture}
	\caption{Lattice points of an $n$-array}
\end{figure}
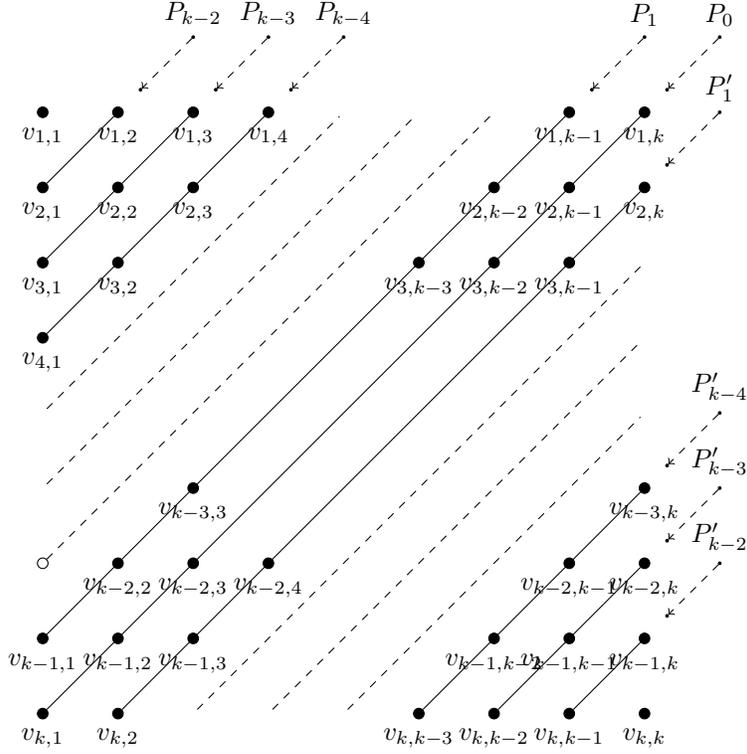

Let $v_{i,j}$ denote the lattice points of the array. Since we are dealing with positive diagonals only, then any $l$-diagonal can be understood as a path having $l$ edges and $l$+1 vertices. As shown in Figure 1 , $P_{r}$ and $P_{r}^{\prime}$ are paths of length $k-r$, where $2\le r \le k$ and $P_{r} = P_{r}^{\prime}$ when $r = 0$. In order to obtain an $l$-diagonal in the path $P_{r}$, we need to find a sub-path $P_{l+1}$ of $P_{r}$. Similarly with path $P_{r}^{\prime}$.\\
From the figure, we observe the following:\\
$\bullet$ $P_r = \{v_{p,k-q-p+1}\}$, where $1\le p\le k$, $1\le q\le k-2$ and $k-q-p+1 > 0$.\\
$\bullet$ $P_{r}^{\prime} = \{v_{r+1,k-p+1}\}$, where 1$\le p\le k-1$, $1\le r\le k-2$ and $k-p+1>0$ .  \\
Thus, as can be seen in Figure \ref{fpath1}, an $n$-array can be represented by a $k \times k$ matrix, where $k = n+1$.\\
\newpage
We shall prove a lemma that will assist us in obtaining our main aim.
\begin{lemma}\label{lem:subpath}
	For every positive integer k $\ge$ 2, let P$_k$ be a path with k vertices. Then, the maximum number of subpaths of length $l$ in $P_k$, i.e., $m_l(P_k)$ = $ \big{\lfloor} \frac{k}{l+1} \big{\rfloor}$.
\end{lemma}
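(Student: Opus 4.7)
The plan is to reduce the lemma to a very short packing argument after unpacking what ``distance at least one in $P_k$'' means for two subpaths on a path. Label the vertices of $P_k$ along the path as $u_1,u_2,\ldots,u_k$. Any subpath of length $l$ is a set of $l+1$ consecutive vertices $\{u_i,u_{i+1},\ldots,u_{i+l}\}$ for some $1\le i\le k-l$, and so a collection of subpaths is completely specified by the list of their starting indices.

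First I would observe that two such subpaths are at distance $\ge 1$ in $P_k$ exactly when they share no vertex; indeed, in a graph, two subgraphs are at distance $0$ iff they meet in a vertex, so distance $\ge 1$ is equivalent to vertex-disjointness. Hence the quantity $m_l(P_k)$ is the maximum number of pairwise vertex-disjoint $(l+1)$-vertex subpaths that fit inside $P_k$.

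With this reformulation, the upper bound is immediate: if $S_1,\ldots,S_m$ is such a collection, then $\sum_{t=1}^m |V(S_t)|=m(l+1)$ distinct vertices are used, and this cannot exceed $|V(P_k)|=k$. Therefore $m(l+1)\le k$, i.e. $m\le \lfloor k/(l+1)\rfloor$. For the matching lower bound I would exhibit the explicit construction
\begin{equation*}
S_t=\{u_{(t-1)(l+1)+1},\,u_{(t-1)(l+1)+2},\,\ldots,\,u_{t(l+1)}\},\qquad t=1,2,\ldots,\left\lfloor\tfrac{k}{l+1}\right\rfloor,
\end{equation*}
which is well defined because $\lfloor k/(l+1)\rfloor\cdot(l+1)\le k$. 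Each $S_t$ is a subpath of length $l$ in $P_k$, and the blocks are pairwise vertex-disjoint by construction, so they are pairwise at distance $\ge 1$.

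Combining the two bounds gives $m_l(P_k)=\lfloor k/(l+1)\rfloor$. There is no real obstacle here beyond correctly interpreting the distance condition; the only point worth stating carefully is the equivalence ``distance $\ge 1$ $\Leftrightarrow$ vertex-disjoint'' on a path, since otherwise a reader might suspect an extra gap vertex is required between blocks (which would give $\lfloor (k+1)/(l+2)\rfloor$ instead and not match the claimed formula).
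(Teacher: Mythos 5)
Your proof is correct, and on the upper bound it takes a cleaner route than the paper. Your lower bound is the same construction the paper uses: pack $(l+1)$-vertex blocks back to back starting from one end, so that consecutive blocks are vertex-disjoint and at distance exactly one; the paper computes the size of this packing as $\lfloor k/(l+1)\rfloor$ via the division algorithm with a case split on the remainder, which your indexing handles in one line. Where you genuinely diverge is the optimality argument. The paper argues by contradiction that any arrangement in which some pair of subpaths is not at distance one yields at least one fewer subpath, but that step is essentially asserted rather than proved (it is really the statement being established). Your argument replaces it with the airtight double count: $m$ pairwise vertex-disjoint subpaths use $m(l+1)$ distinct vertices out of $k$, so $m\le\lfloor k/(l+1)\rfloor$. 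You were also right to pause on the meaning of ``distance at least one'': the paper's own tight packing places consecutive blocks at distance exactly one with no gap vertex, which confirms that the intended condition is vertex-disjointness and that your reading (rather than the $\lfloor (k+1)/(l+2)\rfloor$ alternative) is the one consistent with the claimed formula and with how the lemma is applied to the diagonals of the array.
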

\begin{proof}
	First, we find the number of subpaths $P^{\prime}$ of length $l$ in $P_k$, where every $P^{\prime}$ is at a distance one. Let $q$ denote this number.
	\\
	Case 1.
	When $l = k-1$ ($i.e., $ length of the subpath is equal to length of the path).\\
	Since the total number of vertices in $P_l$ is $l$, the length of $P_l$ is $l-1$ = $k$.\\
	Clearly, we have exactly one subpath $P_l$ in $P_k$, which gives
	\begin{equation}\label{eq:22}
	q = 1
	\end{equation}
	Now,
	\begin{equation}\label{eq:23}
	\bigg{\lfloor} \frac{k}{l+1} \bigg{\rfloor} = 	\bigg{\lfloor} \frac{k}{k-1+1} \bigg{\rfloor} = 	 \lfloor 1 \rfloor = 1 
	\end{equation}
	From equations (\ref{eq:22}) and (\ref{eq:23}), we have 
	\begin{equation*}
	q = \bigg{\lfloor} \frac{k}{l+1} \bigg{\rfloor}
	\end{equation*}
	Case 2.
	When $l < k-1$ ($i.e., $ length of the subpath is less than length of the path).\\
	Every subpath $P_l$ involves $l+1$ vertices.\\
	Therefore, number of subpaths $P_l$ is equal to the number of vertices in $P_k$ divided by the number of vertices in each $P_k$, $i.e.,$\\
	\begin{math}
	q = \frac{k}{l+1}
	\end{math}
	\flushleft
	
	By Euclidean Algorithm, 
	
	\begin{align*}
	k = (l + 1)q + r,~ where~ 0 \le r < l + 1.
	\end{align*}
	Case 2.1.
	If $r = 0$, then
	\begin{align*}
	k = (l+1)q
	\implies \frac{k}{l + 1} = q.
	\end{align*}

	\flushleft	Case 2.2.
	If $r \ne 0$, then
	\begin{align*}
	k = (l+1)q + r.\\
	\implies \frac{k}{l + 1} = q + \frac{r}{l + 1}.\\
	\end{align*}
	Since $r < l + 1$, we have, $$ 
	\frac{r}{l + 1} < 1
	$$
	
	Therefore from Case 1 and Case 2, the number of subpaths $P^{\prime}$ where every subpath is at a distance one is,
	
	\begin{align*}
	q =  \bigg{\lfloor} \frac{k}{l+1} \bigg{\rfloor}
	\end{align*}

	Now we claim that the maximum number of subpaths $P^{\prime}$ occur when every subpath is at a distance 1, and this will conclude our proof.\\
	On the contrary, suppose that the maximum number of subpaths occur in such a way that some subpaths are not at a distance one.\\
	We first consider the case where $k=(l+1)q$.\\
	
	In this case, if two subpaths $P_k$ are not at a distance one, then the number of subpaths $P_k$ wil be at least 1 less than q, $i.e.$, at least \begin{math}
	\big\lfloor \frac{k}{l+1}\big\rfloor - 1
	\end{math}
	\\This is a contradiction since number of subpaths where every edge is at a distance one is 
	\begin{align*}
	\bigg\lfloor \frac{k}{l+1}\bigg\rfloor > \bigg\lfloor \frac{k}{l+1}\bigg\rfloor - 1
	\end{align*}
	Next, we consider the case wher $k = (l+1)q + r$, where $0 < r < l+1$.\\
	If the number of vertices which are not in any of the subpaths $\overline{P}$ is greater than $r$, then the number of subpaths will be at least 1 less than $k$, $i.e.,$ \begin{math}
	\lfloor\frac{k}{l+1}\rfloor - 1
	\end{math}\\
	This is again a contradiction as in the above case.\\
	If the number of vertices which are not in any of the subpaths $P^{\prime}$ is less than or equal to $r$, then the number of subpaths will be \begin{math}
	\lfloor\frac{k}{l+1}\rfloor
	\end{math}\\
	This is the same as $q$, where every subpath $P^{\prime}$ is at a distance one.\\
	Therefore the claim holds, and hence the result.\\
\end{proof}

Applying Lemma \ref{lem:subpath} on each of the paths $P_{r}$ and $P_{r}^{\prime}$ for every $r=2,3,4,...,k$, we will get the maximum number of non-intersecting $l$-diagonals for each of the paths. Any arrangement of the $l$-diagonals in each path $P_{r}$ and $P_{r}^{\prime}$ will never exceed $m_l(P_r)$ and $m_l(P_{r}^{\prime})$ respectively for all $r$.\\
From triangular inequality of real numbers, for any two real number $x$ and $y$, we have 
\begin{equation*}
|x|+|y| \le |x+y|
\end{equation*}
But if $x$ and $y$ are non-negative, we have,
\begin{equation}\label{eq:24}
|x|+|y| = |x+y|
\end{equation}
Since the number of diagonals is a non-negative number then by equation (\ref{eq:24}), the sum of the number of diagonals in each path is equal to the total number of diagonals in all the paths. Since number of $l$-diagonals in each path $P_{r}$ and $P_{r}^{\prime}$ will never exceed $m_l(P_r)$  and $m_l(P_{r}^{\prime})$ respectively, for all $r$, therefore, the number of $l$-diagonals in all the paths will not exceed the sum of  $m_l(P_r)$ and $m_l(P_{r}^{\prime})$ for all $r$. This will give an upper bound for the maximum number of non-intersecting $l$-diagonals in an $n$-array.\\
\begin{lemma}\label{lem:D1}
	$D_1(n) = \frac{n^2+n}{2}$, where $n \equiv~0~(mod~2)$.
\end{lemma}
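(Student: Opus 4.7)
The plan is to sandwich $D_1(n)$ between matching lower and upper bounds, both equal to $\frac{n^2+n}{2}$.

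The lower bound comes for free from Theorem~\ref{th:21}. Specializing to $l=1$ is legitimate because $n$ is a multiple of $l+1 = 2$, and the theorem then gives $L_1(n) = \frac{n^2 + n}{2}$; combined with inequality~(\ref{eq:21}) this yields $D_1(n) \ge \frac{n^2+n}{2}$.

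For the matching upper bound I would use the decomposition into parallel positive-slope paths $P_r$ and $P_r^{\prime}$ set up just before the lemma statement. Each positive $1$-diagonal is a single edge (a subpath of length $1$) of exactly one such path, and two positive $1$-diagonals intersect iff they are adjacent edges on a common path. Applying Lemma~\ref{lem:subpath} with $l=1$ to each of these paths bounds the number of non-intersecting $1$-diagonals it can carry by $\lfloor k^\prime / 2 \rfloor$, where $k^\prime$ is its vertex count; by the additivity noted in~(\ref{eq:24}) the sum of these bounds is an upper bound for $D_1(n)$.

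The final step is to evaluate this sum. With $k = n+1$, the main diagonal $P_0$ has $k$ vertices and contributes $\lfloor k/2 \rfloor$, while the off-diagonals come in pairs, two paths of each vertex count $k^\prime \in \{2,3,\dots,n\}$. Writing $n = 2a$, parity collapses $\lfloor (n+1)/2 \rfloor$ to $a$, and the paired off-diagonal contributions sum to $2a^2$; adding these gives $a(2a+1) = \frac{n(n+1)}{2}$, matching the lower bound.

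I do not anticipate a conceptual obstacle: the decomposition argument is already assembled in the paragraph preceding the lemma statement. The only delicate point is the bookkeeping for the off-diagonal vertex counts, and the hypothesis $n \equiv 0 \pmod{2}$ is precisely what makes the floor functions line up so that the sum equals $\frac{n^2+n}{2}$ on the nose rather than leaving a parity correction.
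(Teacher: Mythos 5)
Your proof is correct, but it takes a genuinely different route from the paper's. The paper disposes of this lemma in two lines: it cites the earlier result of Boyland \emph{et al.}\ that $D_1(n)=L_1(n)$ for $n$ even, and then simply evaluates $L_1(n)$ from Theorem \ref{th:21} with $l=1$. You instead give a self-contained sandwich argument: the lower bound from Theorem \ref{th:21} and inequality (\ref{eq:21}) exactly as the paper would, but the upper bound derived internally by decomposing the array into the positive-slope paths $P_r$, $P_r^{\prime}$, applying Lemma \ref{lem:subpath} with $l=1$ to each (a path on $k^{\prime}$ vertices carries at most $\lfloor k^{\prime}/2\rfloor$ pairwise non-adjacent edges), and summing. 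Your bookkeeping checks out: with $n=2a$ and $k=n+1$, the main diagonal gives $a$ and the paired off-diagonals give $2\sum_{k^{\prime}=2}^{n}\lfloor k^{\prime}/2\rfloor = 2a^2$, for a total of $a(2a+1)=\frac{n^2+n}{2}$. One small point worth making explicit is why two positive $1$-diagonals intersect only when they are adjacent edges of the same path: each unit square contains only one positive diagonal, and any two positive $1$-diagonals sharing a lattice point necessarily lie on the same diagonal line, so the path decomposition really does capture all intersections. What your approach buys is independence from the external citation and a demonstration, in the simplest case, of exactly the method the paper then deploys for $D_2$ and $D_3$ in Lemmas \ref{lem:D2} and \ref{lem:D3}; what the paper's approach buys is brevity, at the cost of resting this base case on a result proved elsewhere.
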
 	
\begin{proof}
	By a result \cite{a}, for any $n$-square matrix where $n$ is even,  we have, $D_1(n) = L_1(n)$.\\
	Therefore, using Theorem \ref{th:21}, we have,
	
	\begin{equation*}
	~~~~~~~~ D_1(n) = \frac{n^2-n(1-2)}{1+1}
	\end{equation*}
	\begin{equation*}
	\implies D_1(n) = \frac{n^2+n}{2}
	\end{equation*}
\end{proof}
\begin{lemma}\label{lem:D2}
	$D_2(n) \le \frac{n^2}{3}$, where n is a positive integer and $n \equiv~0~(mod~3)$.
\end{lemma}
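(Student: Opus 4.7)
The plan is to specialise the upper-bound machinery set up just before this lemma to the case $l=2$ and $n=3m$. Any positive 2-diagonal is a subpath of length $2$ (three consecutive vertices) lying on exactly one of the antidiagonal paths $P_r$ or $P'_r$ of the lattice, and distinct antidiagonals share neither lattice points nor unit squares, so any set of pairwise non-intersecting 2-diagonals is partitioned path by path. By Lemma \ref{lem:subpath}, a path on $v$ vertices contributes at most $\lfloor v/3 \rfloor$ such subpaths.

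First I would enumerate the paths by vertex count. In the $(n+1)\times(n+1)$ lattice there is one main antidiagonal $P_0=P'_0$ on $n+1$ vertices, and for each $v\in\{1,2,\dots,n\}$ exactly two antidiagonals on $v$ vertices, one on each side of the main one. Summing the per-path bound from Lemma \ref{lem:subpath} over all paths gives
\begin{equation*}
D_2(n) \;\le\; \left\lfloor\frac{n+1}{3}\right\rfloor + 2\sum_{v=1}^{n}\left\lfloor\frac{v}{3}\right\rfloor.
\end{equation*}

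Next I would substitute $n=3m$ and evaluate. The main antidiagonal contributes $\lfloor(3m+1)/3\rfloor=m$. Grouping the remaining $3m$ terms in consecutive triples $(3j-2,3j-1,3j)$ for $j=1,\dots,m$ yields $\lfloor(3j-2)/3\rfloor+\lfloor(3j-1)/3\rfloor+\lfloor 3j/3\rfloor=(j-1)+(j-1)+j=3j-2$, so the inner sum equals $\sum_{j=1}^{m}(3j-2)=(3m^2-m)/2$. Therefore
\begin{equation*}
D_2(n) \;\le\; m + 2\cdot\frac{3m^2-m}{2} \;=\; 3m^2 \;=\; \frac{n^2}{3},
\end{equation*}
as claimed.

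The main obstacle is really bookkeeping rather than mathematics: one must verify that every positive 2-diagonal is captured by exactly one antidiagonal path, and that non-intersection of 2-diagonals (no shared lattice point, no shared unit square) coincides within a single antidiagonal with the ``distance at least one'' separation used in Lemma \ref{lem:subpath}, so that the per-path bounds can legitimately be added. Once this correspondence is made explicit, only the floor-sum computation above remains, and its value already matches the lower bound $L_2(n)=n^2/3$ from Theorem \ref{th:21}, showing the inequality is in fact tight.
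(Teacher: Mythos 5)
Your proposal is correct and follows essentially the same route as the paper: decompose the array into the positive-slope lattice paths, bound each path's contribution by Lemma \ref{lem:subpath}, and add up the resulting floor terms. The only difference is in evaluating the sum: your grouping into consecutive triples yields the exact value $3m^2 = n^2/3$ directly, whereas the paper's residue-class case analysis lands on the slightly looser $(n^2+2)/3$ and then invokes integrality of the count together with $3\mid n$ to round down to $n^2/3$.
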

\begin{proof}
	Let $M$ denote the number of non-intersecting 2-diagonals in any arrangement in an $n$ array. This array can be represented as a $k \times k$ matrix, where $k=n+1$.\\
	Now, $n \equiv0(mod~ 3) \implies n = 3a$, where $a$ is a positive integer.
	By Lemma 1, we have,
	
	\begin{equation}\label{eq:25}
	M \le \bigg\lfloor\frac{k}{3}\bigg\rfloor+2\bigg\lfloor\frac{k-1}{3}\bigg\rfloor+2\bigg\lfloor\frac{k-2}{3}\bigg\rfloor+2\bigg\lfloor\frac{k-3}{3}\bigg\rfloor+...
	...+2\bigg\lfloor\frac{4}{3}\bigg\rfloor+2\bigg\lfloor\frac{3}{3}\bigg\rfloor+2\bigg\lfloor\frac{2}{3}\bigg\rfloor
	\end{equation}
	Each term inside the floor function on the right hand side of (2.5) can be represented as\\
	\begin{math}
	\big\lfloor\frac{k-x}{3}\big\rfloor,
	\end{math} where $x=1,2,3,....,k-2.$\\
	We have the following cases:\\
	Case 1:
	When \begin{math}
	x \equiv 0 (mod~3)
	\implies x=3z,
	\end{math}
	where $z$ is a positive integer.\\
	Then,\\
	\begin{equation*}
	\bigg\lfloor\frac{k-x}{3}\bigg\rfloor = 	 \bigg\lfloor\frac{3a +1-3z}{3}\bigg\rfloor =  	 \bigg\lfloor(a-z)+\frac{1}{3}\bigg\rfloor = a-z
	\end{equation*}
	Case 2: 
	When \begin{math}
	x \equiv 1 (mod~3)
	\implies x=3z+1,
	\end{math}
	where $z$ is a positive integer.\\
	Then,\\
	\begin{equation*}
	\bigg\lfloor\frac{k-x}{3}\bigg\rfloor = 	 \bigg\lfloor\frac{3a +1-3z-1}{3}\bigg\rfloor =  	 \big\lfloor(a-z)\big\rfloor = a-z
	\end{equation*}
	Case 3: 
	When \begin{math}
	x \equiv 2 (mod~3)
	\implies x=3z+2,
	\end{math}
	where $z$ is a positive integer.\\
	Then,\\
	\begin{equation*}
	\bigg\lfloor\frac{k-x}{3}\bigg\rfloor = 	 \bigg\lfloor\frac{3a +1-3z-2}{3}\bigg\rfloor =  	 \bigg\lfloor(a-z)-\frac{1}{3}\bigg\rfloor = a-z-1
	\end{equation*}

	Therefore, we have the following:\\
	When $z=0,$\\ \begin{math}
	By~ Case~ 1,~ 3z =0 \implies x=0\implies \big\lfloor\frac{k-x}{3}\big\rfloor = \big\lfloor\frac{k}{3}\big\rfloor = a-0\\
	By~ Case~ 2,~3z+1=1 \implies x=1 \implies \big\lfloor\frac{k-x}{3}\big\rfloor= \big\lfloor\frac{k-1}{3}\big\rfloor =  a-0 \\
	By~ Case~ 3,~3z+2=2 \implies x=2 \implies \big\lfloor\frac{k-x}{3}\big\rfloor= \big\lfloor\frac{k-2}{3}\big\rfloor= a-0-1 
	\end{math}\\
	When $z=1,$\\ \begin{math}
	By~ Case~ 1,~ 3z =3 \implies x=3\implies \big\lfloor\frac{k-x}{3}\big\rfloor = \big\lfloor\frac{k-3}{3}\big\rfloor =  a-1\\
	By~ Case~ 2,~ 3z+1=4 \implies x=4 \implies \big\lfloor\frac{k-x}{3}\big\rfloor= \big\lfloor\frac{k-4}{3}\big\rfloor=  a-1 \\
	By~ Case~ 3,~ 3z+2=5 \implies x=5 \implies \big\lfloor\frac{k-x}{3}\big\rfloor= \big\lfloor\frac{k-5}{3}\big\rfloor= a-1-1 
	\end{math}\\
	.\\
	.\\
	.\\
	When $z=\frac{k}{3}-2,$\\ \begin{math}
	By~ Case~ 1,~ 3z =k-6 \implies x=k-6\implies \big\lfloor\frac{k-x}{3}\big\rfloor = \big\lfloor\frac{6}{3}\big\rfloor =  a-\big(\frac{k}{3}-2\big)-1\\
	By~ Case~ 2,~ 3z +1 =k-5 \implies x=k-5\implies \big\lfloor\frac{k-x}{3}\big\rfloor = \big\lfloor\frac{5}{3}\big\rfloor =  a-\big(\frac{k}{3}-2\big)\\
	By~ Case~ 3,~ 3z +2 =k-4 \implies x=k-4\implies \big\lfloor\frac{k-x}{3}\big\rfloor = \big\lfloor\frac{4}{3}\big\rfloor =  a-\big(\frac{k}{3}-2\big)\\
	\end{math}
	When $z=\frac{k}{3}-1,$\\ \begin{math}
	By~ Case~ 1,~ 3z =k-3 \implies x=k-3\implies \big\lfloor\frac{k-x}{3}\big\rfloor = \big\lfloor\frac{3}{3}\big\rfloor =  a-\big(\frac{k}{3}-1\big)-1\\
	By~ Case~ 2,~ 3z +1 =k-2 \implies x=k-2\implies \big\lfloor\frac{k-x}{3}\big\rfloor = \big\lfloor\frac{2}{3}\big\rfloor =  a-\big(\frac{k}{3}-1\big)\\
	\end{math}\\
	Substituting in equation (\ref{eq:25}), we get,
	\begin{equation*}
	M \le (a-0) + 2(a-0)+2(a-0-1) + 2(a-1)+2(a-1)+2(a-1-1) +....
	\end{equation*}
	\begin{equation*}
	+ 2\bigg(a-\bigg(\frac{k}{3}-2\bigg)\bigg)+ 2\bigg(a-\bigg(\frac{k}{3}-2\bigg)\bigg)+ 2\bigg(a-\bigg(\frac{k}{3}-2\bigg)-1\bigg)
	+ 2\bigg(a-\bigg(\frac{k}{3}-1\bigg)\bigg)+ 2\bigg(a-\bigg(\frac{k}{3}-1\bigg)\bigg)
	\end{equation*} 
	\begin{equation*}
	=9a+2-\frac{4}{3}k + 4\sum_{t=1}^{\frac{k}{3}-2}4(a-t) + \sum_{t=1}^{\frac{k}{3}-2}2(a-t-1)
	\end{equation*}
	\begin{equation*}
	=9a+2-\frac{4}{3}k +2(a-1)\bigg(\frac{k}{3}-2\bigg)-\frac{2}{2}\bigg(\frac{k}{3}-2\bigg)\bigg(\frac{k}{3}-1\bigg)
	+4a\bigg(\frac{k}{3}-2\bigg)-\frac{4}{2}\bigg(\frac{k}{3}-2\bigg)\bigg(\frac{k}{3}-1\bigg)
	\end{equation*}
	\begin{equation*}
	=9a+2-\frac{4}{3}k+\bigg(\frac{k}{3}-2\bigg)\bigg(2(a-1)-\bigg(\frac{k}{3}-1\bigg)+4a-\bigg(\frac{k}{3}-1\bigg)\bigg)
	\end{equation*}
	\begin{equation*}
	=\frac{1}{3}[27a +6 -4k+(k-6)(6a-k+1) ]
	\end{equation*}
	Substituting the values of $a$ and $k$ in terms of $n$, we get,
	\begin{equation*}
	=\frac{1}{3}[5n+2+(n-5)n]
	\end{equation*}
	\begin{equation*}
	=\frac{1}{3}(n^2+2)
	\end{equation*}
	Thus,
	\begin{equation}\label{eq:26}
	M\le \frac{n^2+2}{3}
	\end{equation}
	However, the number of non-intersecting $l$-diagonals is always a non-negative integer. Therefore, we have put  equation (\ref{eq:26}) as
	\begin{equation*}
	M\le \bigg\lfloor \frac{n^2+2}{3}\bigg\rfloor = \bigg\lfloor\frac{n^2}{3}+\frac{2}{3}\bigg\rfloor = \frac{n^2}{3}
	\end{equation*}
	since 3 is a factor of $n$.
	\\Therefore, 
	
	\begin{equation*}
	D_2(n) \le \frac{n^2}{3}
	\end{equation*}

\end{proof}

\begin{lemma}\label{lem:D3}
	$D_3(n) \le \frac{n^2-n}{4}$, where n is a positive integer and $n \equiv0~(mod~4)$.
\end{lemma}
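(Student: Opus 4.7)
My plan is to mirror the strategy used in Lemma \ref{lem:D2}, now with $l = 3$ and $l + 1 = 4$. Since $n \equiv 0 \pmod 4$, I write $n = 4a$, so the $n$-array corresponds to a $k \times k$ lattice of vertices with $k = n + 1 = 4a + 1$, stratified as before into the positive-slope paths $P_0, P_1, \ldots, P_{k-2}$ together with their reflections $P'_1, \ldots, P'_{k-2}$.

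I would start by applying Lemma \ref{lem:subpath} to each of these paths to obtain the upper bound
\begin{equation*}
M \le \left\lfloor \frac{k}{4} \right\rfloor + 2\sum_{x=1}^{k-2} \left\lfloor \frac{k-x}{4} \right\rfloor,
\end{equation*}
where $M$ denotes the number of non-intersecting $3$-diagonals in an arbitrary arrangement. The factor of $2$ accounts for the symmetric pair $P_r, P'_r$ off the main diagonal, and the isolated first term corresponds to the main diagonal $P_0$.

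Next, I would evaluate the right-hand side by the same residue-class method used in Lemma \ref{lem:D2}, now splitting by $x \pmod 4$ into four cases instead of three. Writing $x = 4z + r$ with $r \in \{0,1,2,3\}$ and $k = 4a+1$, direct computation of $\lfloor (k-x)/4 \rfloor$ yields $a-z$ when $r \in \{0,1\}$ and $a-z-1$ when $r \in \{2,3\}$. Since $x$ ranges from $0$ through $4a-1$, the index $z$ runs exactly from $0$ to $a-1$ and all four residues are hit for every $z$, so no boundary irregularity arises. Summing by $z$, and handling the $x=0$ term separately because it has coefficient $1$ rather than $2$, the bound collapses into an elementary arithmetic-series identity.

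After the substitution $a = n/4$, I expect the expression to simplify to $(n^2-n)/4$. Since $4 \mid n$, this is already an integer, so the floor correction invoked at the end of Lemma \ref{lem:D2} is automatic and requires no additional comment. The main obstacle is purely bookkeeping --- keeping the separate treatment of the main-diagonal term and correctly assembling the four residue-class contributions for each $z$ --- but this is entirely parallel to the $l=2$ case and introduces no new conceptual difficulty, just a slightly longer routine computation arising from having four residue classes rather than three.
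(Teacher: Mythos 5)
Your proposal is correct and follows essentially the same route as the paper: the same path decomposition, the same application of Lemma \ref{lem:subpath} yielding $M \le \lfloor k/4\rfloor + 2\sum_{x=1}^{k-2}\lfloor (k-x)/4\rfloor$, and the same residue-class evaluation of the floors. Your bookkeeping is in fact slightly cleaner --- since $x$ runs over exactly $a$ complete blocks of four residues, the sum evaluates exactly to $4a^2-a=(n^2-n)/4$, whereas the paper's arithmetic lands on $(n^2-n+1)/4$ and needs the final floor step to reach the same bound.
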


\begin{proof}
	Let $M$ denote the number of non-intersecting 3-diagonals in any arrangement in an $n$ array. This array can be represented as a $k \times k$ matrix, where $k=n+1$.\\
	Now, $n \equiv0(mod~ 4) \implies n = 4a$, where $a$ is a positive integer. \\ Using Lemma 1, we have,
	
	\begin{equation}\label{eq:27}
	M \le \bigg\lfloor\frac{k}{4}\bigg\rfloor+
	2\bigg\lfloor\frac{k-1}{4}\bigg\rfloor+2\bigg\lfloor\frac{k-2}{4}
	\bigg\rfloor+2\bigg\lfloor\frac{k-3}{4}\bigg\rfloor+...
	...+2\bigg\lfloor\frac{4}{4}\bigg\rfloor+2\bigg\lfloor\frac{3}{4}
	\bigg\rfloor+2\bigg\lfloor\frac{2}{4}\bigg\rfloor
	\end{equation}
	Each term inside the floor function on the right hand side of equation (\ref{eq:27}) can be represented as\\
	\begin{math}
	\big\lfloor\frac{k-x}{4}\big\rfloor,
	\end{math} where $x=1,2,3,....,k-2.$\\
	We have the following cases:\\
	Case 1:
	When \begin{math}
	x \equiv 0 (mod~4)
	\implies x=4z,
	\end{math}
	where $z$ is a positive integer.\\
	Then,\\
	\begin{equation*}
	\bigg\lfloor\frac{k-x}{4}\bigg\rfloor = 	 \bigg\lfloor\frac{4a +1-4z}{4}\bigg\rfloor = \bigg\lfloor(a-z)+\frac{1}{4}\bigg\rfloor = a-z
	\end{equation*}
	Case 2: 
	When \begin{math}
	x \equiv 1 (mod~4)
	\implies x=4z+1,
	\end{math}
	where $z$ is a positive integer.\\
	Then,\\
	\begin{equation*}
	\bigg\lfloor\frac{k-x}{3}\bigg\rfloor = 	 \bigg\lfloor\frac{4a +1-4z-1}{4}\bigg\rfloor =  	 \big\lfloor(a-z)\big\rfloor = a-z
	\end{equation*}
	Case 3: 
	When \begin{math}
	x \equiv 2 (mod~4)
	\implies x=4z+2,
	\end{math}
	where $z$ is a positive integer.\\
	Then,\\
	\begin{equation*}
	\bigg\lfloor\frac{k-x}{4}\bigg\rfloor = 	 \bigg\lfloor\frac{4a +1-4z-2}{4}\bigg\rfloor =  	 \bigg\lfloor(a-z)-\frac{1}{4}\bigg\rfloor = a-z-1
	\end{equation*}
	Case 4: 
	When \begin{math}
	x \equiv 3 (mod~4)
	\implies x=4z+3,
	\end{math}
	where $z$ is a positive integer.\\
	Then,\\
	\begin{equation*}
	\bigg\lfloor\frac{k-x}{4}\bigg\rfloor = 	 \bigg\lfloor\frac{4a +1-4z-3}{4}\bigg\rfloor =  	 \bigg\lfloor(a-z)-\frac{1}{4}\bigg\rfloor = a-z-1
	\end{equation*}
	
	Therefore, we have the following:\\
	When $z=0,$\\ \begin{math}
	By~ Case~ 1,~ 4z =0 \implies x=0\implies \big\lfloor\frac{k-x}{4}\big\rfloor = \big\lfloor\frac{k}{3}\big\rfloor = a-0\\
	By~ Case~ 2,~4z+1=1 \implies x=1 \implies \big\lfloor\frac{k-x}{4}\big\rfloor= \big\lfloor\frac{k-1}{4}\big\rfloor =  a-0 \\
	By~ Case~ 3,~4z+2=2 \implies x=2 \implies \big\lfloor\frac{k-x}{4}\big\rfloor= \big\lfloor\frac{k-2}{4}\big\rfloor= a-0-1 \\
	By~ Case~ 4,~4z+3=3 \implies x=3 \implies \big\lfloor\frac{k-x}{4}\big\rfloor= \big\lfloor\frac{k-2}{4}\big\rfloor= a-0-1 		
	\end{math}\\
	When $z=1,$\\ \begin{math}
	By~ Case~ 1,~ 4z =4 \implies x=4\implies \big\lfloor\frac{k-x}{4}\big\rfloor = \big\lfloor\frac{k-4}{3}\big\rfloor = a-1\\
	By~ Case~ 2,~4z+1=5 \implies x=5 \implies \big\lfloor\frac{k-x}{4}\big\rfloor= \big\lfloor\frac{k-5}{4}\big\rfloor =  a-1 \\
	By~ Case~ 3,~4z+2=6 \implies x=2 \implies \big\lfloor\frac{k-x}{4}\big\rfloor= \big\lfloor\frac{k-6}{4}\big\rfloor= a-1-1 \\
	By~ Case~ 4,~4z+3=7 \implies x=3 \implies \big\lfloor\frac{k-x}{4}\big\rfloor= \big\lfloor\frac{k-7}{4}\big\rfloor= a-1-1 	
	\end{math}\\
	.\\
	.\\
	.\\
	When $z=\frac{k}{4}-2,$\\ \begin{math}
	By~ Case~ 1,~ 4z =k-8 \implies x=4\implies \big\lfloor\frac{k-x}{4}\big\rfloor = \big\lfloor\frac{8}{4}\big\rfloor = a-\big(\frac{k}{4}-2 \big)\\
	By~ Case~ 2,~4z +1=k-7 \implies x=4\implies \big\lfloor\frac{k-x}{4}\big\rfloor = \big\lfloor\frac{7}{4}\big\rfloor = a-\big(\frac{k}{4}-2 \big)\\
	By~ Case~ 3,~4z +2=k-6 \implies x=4\implies \big\lfloor\frac{k-x}{4}\big\rfloor = \big\lfloor\frac{6}{4}\big\rfloor = a-\big(\frac{k}{4}-2 \big)-1\\
	By~ Case~ 4,~4z +2=k-5 \implies x=4\implies \big\lfloor\frac{k-x}{4}\big\rfloor = \big\lfloor\frac{5}{4}\big\rfloor = a-\big(\frac{k}{4}-2 \big)-1
	\end{math}\\
	When $z=\frac{k}{4}-1,$\\
	\begin{math}
	By~ Case~ 1,~ 4z =k-4 \implies x=4\implies \big\lfloor\frac{k-x}{4}\big\rfloor = \big\lfloor\frac{4}{4}\big\rfloor = a-\big(\frac{k}{4}-2 \big)\\
	By~ Case~ 2,~4z +1=k-3 \implies x=4\implies \big\lfloor\frac{k-x}{4}\big\rfloor = \big\lfloor\frac{3}{4}\big\rfloor = a-\big(\frac{k}{4}-2 \big)\\
	By~ Case~ 3,~4z +2=k-2 \implies x=4\implies \big\lfloor\frac{k-x}{4}\big\rfloor = \big\lfloor\frac{2}{4}\big\rfloor = a-\big(\frac{k}{4}-2 \big)-1\\
	\end{math}\\
	Substituting in equation (\ref{eq:27}), we get,
	\begin{equation*}
	M \le (a-0) + 2(a-0)+2(a-0-1)+2(a-0-1) + 2(a-1)+2(a-1)+2(a-1-1)
	\end{equation*}
	\begin{equation*}
	~~~~	+2(a-1-1)+......+ 2\bigg(a-\bigg(\frac{k}{4}-2\bigg)\bigg)+ 2\bigg(a-\bigg(\frac{k}{4}-2\bigg)\bigg)+ 2\bigg(a-\bigg(\frac{k}{4}-2\bigg)-1\bigg)
	\end{equation*} 
	\begin{equation*}
	~~~~	+2\bigg(a-\bigg(\frac{k}{4}-2\bigg)-1\bigg)
	+ 2\bigg(a-\bigg(\frac{k}{4}-1\bigg)\bigg)+ 2\bigg(a-\bigg(\frac{k}{4}-1\bigg)\bigg)+
	2\bigg(a-\bigg(\frac{k}{4}-1\bigg)-1\bigg)
	\end{equation*}
	\begin{equation*}
	=13a-\frac{3}{2}k + \sum_{t=1}^{\frac{k}{3}-2}4(a-t) + 4\sum_{t=1}^{\frac{k}{3}-2}4(a-t-1)
	\end{equation*}
	\begin{equation*}
	=13a-\frac{3}{2}k 
	+4a\bigg(\frac{k}{4}-2\bigg)-\frac{4}{2}\bigg(\frac{k}{4}-2\bigg)\bigg(
	\frac{k}{4}-1\bigg)+4(a-1)\bigg(\frac{k}{4}-2\bigg)
	\end{equation*}
	\begin{equation*}
	~~~-\frac{4}{2}\bigg(\frac{k}{4}-2\bigg)\bigg(\frac{k}{4}-1\bigg)
	\end{equation*}
	\begin{equation*}
	=13a-\frac{3}{2}k+\bigg(\frac{k}{4}-2\bigg)\bigg(4a-2\bigg(\frac{k}{4}-
	1\bigg)+4(a-1)-\bigg(\frac{k}{4}-1\bigg)\bigg)
	\end{equation*}
	\begin{equation*}
	=\frac{1}{4}[52a -6k+(k-8)(8a-k) ]
	\end{equation*}
	Substituting the values of $a$ and $k$ in terms of $n$, we get,
	\begin{equation*}
	=\frac{1}{4}[7n-6+(n-7)(n-1)]
	\end{equation*}
	\begin{equation*}
	=\frac{1}{4}(n^2-n+1)
	\end{equation*}
	Thus,
	
	\begin{equation}\label{eq:28}
	M\le \frac{1}{4}(n^2-n+1)
	\end{equation}
	However, the number of non-intersecting $l$-diagonals is always a non-negative integer. Therefore, we have equation (\ref{eq:28}) as
	\begin{equation*}
	M\le \bigg\lfloor \frac{1}{4}(n^2-n+1)\bigg\rfloor = \bigg\lfloor\frac{n^2-n}{4}+\frac{1}{4}\bigg\rfloor = \frac{n^2-n}{4},
	\end{equation*}
	since 4 is a factor of $n$.
	
	Therefore, 
	
	\begin{equation*}
	D_3(n) \le \frac{n^2-n}{4}
	\end{equation*}
	
\end{proof}

From Lemma \ref{lem:D1}, Lemma \ref{lem:D2} and Lemma \ref{lem:D3}, we observe that the results follow a pattern and hence we can get a general result,
\begin{equation}\label{eq:29}
D_l(n)\le \frac{n^2-n(l-2)}{l+1}
\end{equation}
From equations (\ref{eq:21}) and (\ref{eq:29}), we have 
\begin{equation*}
D_l(n) = \frac{n^2-n(l-2)}{l+1}
\end{equation*}

%
%
%

\renewcommand{\thesection}{\large 3}
\section{\large Conclusion}

In this paper, we have derived a formula for the maximum number of non-intersecting diagonals in an $n \times n$ array, for any arbitrary length $l$ of the diagonal and for $n \equiv~0~(mod~(l+1))$.

In the process of deriving this formula, a result for the maximum number of independent subpaths in a given path has also been obtained. This result was used to assist in attaining the desired formula.


\end{document}